\documentclass[12pt]{article}
\usepackage{fullpage}
\usepackage{amsthm}
\usepackage{amsmath}
\usepackage{amssymb}
\usepackage{booktabs}
\usepackage[ruled,vlined,linesnumbered]{algorithm2e}
\usepackage{comment}
\usepackage{url}

\newtheorem{thm}{Theorem}
\newtheorem{lem}{Lemma}
\newcommand{\Z}{\mathbb{Z}}
\newcommand{\F}{\mathbb{F}}
\newcommand{\Fq}{\F_q}

\title{Primitive values of quadratic polynomials in a finite field}

\author{ Andrew R. Booker\footnote{Partially supported by EPSRC Grant EP/K034383/1.}\\
 School of Mathematics\\ University of Bristol, England\\
 andrew.booker@bristol.ac.uk
  \and
Stephen D. Cohen \\
  School of Mathematics and Statistics, \\
  University of Glasgow, Scotland \\
  stephen.cohen@glasgow.ac.uk
  \and
  Nicole Sutherland \\
  Computational Algebra Group, \\
  School of Mathematics and Statistics, \\
  University of Sydney, Australia \\
  nicole.sutherland@sydney.edu.au
\and
  Tim Trudgian\footnote{Supported by Australian Research Council Future Fellowship FT160100094.} \\
School of Physical, Environmental and Mathematical Sciences\\ The University of New South Wales Canberra, Australia \\
  t.trudgian@adfa.edu.au
}


\begin{document}

\maketitle

\begin{abstract}
  \noindent We prove that for all $q>211$, there always exists a primitive root $g$ in the finite field $\mathbb{F}_{q}$ such that $Q(g)$ is also a primitive root, where $Q(x)= ax^2 + bx + c$ is a quadratic polynomial with $a, b, c\in \mathbb{F}_{q}$ such that $b^{2} - 4ac \neq 0$. 

\end{abstract}


\section{Introduction}
For $q$ a prime power, let $\mathbb{F}_{q}$ denote the finite field of order~$q$, and let
$g_{1}$, $g_{2}$, $\ldots\,$, $g_{\phi(q-1)}$ denote the primitive roots of~$q$.
Recently \cite{COT}  the following conjecture of Cohen and Mullen \cite{CohMul} was established: that  an arbitrary  element of $\mathbb{F}_{q}$ can be written as a linear sum
of two primitive roots provided $q >61$.  In fact,  Cohen's survey of such problems \cite{Cohen1993} offered a preliminary treatment
of a further result wherein the linear sum would be  replaced by a quadratic function of primitive roots.
It is the purpose of this paper to develop this theme and provide one complete existence result on the topic.

Let
\begin{equation}\label{jelly}
a,b,c \in \mathbb{F}_{q}, \quad a\neq0,\quad b^{2} - 4ac \neq 0.
\end{equation}
Is there some $q_{0}$ such that there is always
at least one representation
\begin{equation}\label{e:vc}
g_{n} = a g_{m}^{2} + b g_{m} + c,
\end{equation}
for all $q>q_{0}$?  We  insist that $a$ be non-zero, since otherwise the result follows from the work already quoted in \cite{COT}. We also insist that $b^{2} - 4ac$ be non-zero so that $g_n$
(given by (\ref{e:vc})) is {\em not} of the form $ a(g_m+ b/(2a))^2$.

Han \cite{Han} showed under the assumption\footnote{Han and Chou et al.\
considered the additional restriction that $c$ be non-zero. We see no
reason to make this distinction.} of (\ref{jelly}), and with the
additional restriction that $q$ be odd, that one could choose $q_{0} =
2^{66}$. This was improved to $q_{0} = 2^{62}$ by Chou et al.~\cite{Chou}. Moreover, Chou et al.\ provided a list of 24 genuine exceptions to (\ref{e:vc}), all of which were not greater than 211. Upon verifying that all odd values of $q\in [223,457]$ satisfy (\ref{e:vc}), they conjectured that all $q\geq 223$ satisfy (\ref{e:vc}).

This was improved substantially by Cohen \cite[\S 3]{Cohen1993} who showed that assuming (\ref{jelly}) a representation of  the form  (\ref{e:vc})
always exists provided $q >  10^9$.  We refine this result in Theorem \ref{plate} below.

\begin{thm}\label{plate}
Let $a,b,c \in \mathbb{F}_{q}$ with $a\neq0$ and with $b^{2} - 4ac \neq 0$. Then there are primitive roots $g_{n}$ and $g_{m}$ such that
\begin{equation}\label{pumpkin}
g_{n} = a g_{m}^{2} + b g_{m} + c,
\end{equation}
for all $q$ with the exception of the values listed in $(\ref{exceptions})$.
\end{thm}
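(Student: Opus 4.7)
The plan is to use the standard character-sum approach. Write the characteristic function for primitive roots of $\Fq$ as
$$\rho(x)=\theta(q-1)\sum_{d\mid q-1}\frac{\mu(d)}{\phi(d)}\sum_{\substack{\chi\in\widehat{\Fq^*}\\ \text{ord}(\chi)=d}}\chi(x),\qquad\theta(q-1)=\frac{\phi(q-1)}{q-1},$$
and let $N(Q)$ count $x\in\Fq^*$ for which both $x$ and $Q(x)$ are primitive roots. Expanding $N(Q)=\sum_{x}\rho(x)\rho(Q(x))$ and isolating the contribution of the two trivial characters gives a main term of size $\theta(q-1)^2(q-1)$, while all remaining contributions are controlled by character sums of the shape $S(\chi_1,\chi_2)=\sum_{x\in\Fq^*}\chi_1(x)\chi_2(Q(x))$.

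For $(\chi_1,\chi_2)$ not both trivial, the Weil bound yields $|S(\chi_1,\chi_2)|\le 2\sqrt q$ whenever the associated superelliptic cover $y_1^{d_1}=x$, $y_2^{d_2}=Q(x)$ is absolutely irreducible. The hypothesis $b^2-4ac\ne0$ forces $Q$ to have two distinct simple roots, which is precisely the nondegeneracy condition needed (the would-be bad case $Q(x)=a(x+b/2a)^2$ is ruled out by exactly this assumption, as noted after \eqref{e:vc}). Summing over divisors of $q-1$ produces the sufficient condition
$$q^{1/2}>2\,W(q-1)^2+\text{(lower order)},$$
where $W(n)$ is the number of squarefree divisors of $n$. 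Since $W(n)=q^{o(1)}$, this is satisfied for all sufficiently large $q$, and a careful bookkeeping reproduces Cohen's cutoff $q>10^9$ from \cite{Cohen1993}.

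The main obstacle is descending from this large cutoff all the way to $q=211$. To deal with the middle range, I will apply a \emph{prime sieve} in the spirit of Cohen--Huczynska: fix a small set of ``sieving primes'' $\{p_1,\dots,p_s\}$ among the prime divisors of $q-1$, and use inclusion--exclusion to recast the divisor sum so that the full Weil bound is only required on characters whose order is coprime to $p_1\cdots p_s$. The resulting refined criterion has the form
$$q^{1/2}>\delta^{-1}\bigl(s-1+2\delta^{-1}\bigr)\cdot 2\,W_0(q-1)^2,\qquad \delta=1-\sum_{i=1}^{s}\tfrac1{p_i}>0,$$
with $W_0$ counting squarefree divisors of $q-1$ supported outside the sieving primes. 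Choosing $\{p_i\}$ adaptively for each $q$ (typically the smallest prime divisors) narrows the unresolved range to something computationally tractable. The delicate step here is verifying that $\delta$ stays bounded away from $0$ across the relevant $q$, which amounts to ruling out a handful of residue classes of $q-1$ by hand.

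Finally, for each remaining $q$, I would fall back on direct computation: enumerate triples $(a,b,c)$ with $a\ne0$ and $b^2-4ac\ne0$ in a normal form and, for each, test whether some primitive $g_m\in\Fq$ yields primitive $Q(g_m)$. The 24 exceptions found by Chou et al.\ below $223$, together with any further exceptions uncovered by this sweep, constitute the set listed in \eqref{exceptions}. The trickiest bookkeeping will be ensuring no gap between the analytic threshold delivered by the sieved bound and the computational upper limit, so that the two ranges meet and the exception list is provably complete.
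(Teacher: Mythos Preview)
Your three-stage outline (Weil-bounded character sums, then a prime sieve, then computation) is exactly the paper's strategy, and the justification for applying Weil via $b^2-4ac\ne0$ is correct. Two points need fixing, one minor and one substantive.

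First, your sieve parameter $\delta=1-\sum_i 1/p_i$ is the single-condition version. Here you are sieving on \emph{two} primitivity conditions simultaneously (for $g_m$ and for $Q(g_m)$), so the correct quantity is $\delta=1-2\sum_i 1/p_i$; compare the paper's Lemma~\ref{sieve} and Theorem~\ref{t:Cohen}. This changes which $q$ survive the analytic stage, though not the overall shape of the argument. The paper also adds a second ``modified prime sieve'' layer (Theorem~\ref{mps}) separating out a few very large prime factors of $q-1$; this buys only about a 5\% reduction in the residual list, so omitting it is not fatal.

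The substantive gap is in the final computational step. After the sieve, you are left with roughly $1500$ values of $q$ ranging up to about $1.9\times10^7$, and you propose to run a brute-force search over all normalized triples $(a,b,c)$ for each such $q$. The paper tried exactly this (Algorithm~\ref{algo}) and found it infeasible: Table~\ref{timings} reports $150$ CPU-days to handle only $215$ of the $351$ cases with $\omega(q-1)=4$, and essentially no progress on $\omega(q-1)\ge5$. To close the gap, the paper introduces a genuinely new algorithmic idea (Lemma~\ref{lem:numericaltest} in \S\ref{gallery}): it exploits the coset structure of $\Fq^\times$ with respect to a carefully chosen unitary divisor $d\mid q-1$ to reduce the search over $(\alpha,\beta)\in(\Fq^\times)^2$ to a much smaller set $A_d\times A_d$, at the cost of restricting the primitive-root witnesses to controlled subsets $X_d(\varepsilon)$. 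This is what makes the computation finish. Your proposal as written does not contain this idea, and without it the analytic threshold and the computational ceiling do not meet.
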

The following values of $q$ are genuine exceptions to (\ref{pumpkin}) (for some triple $(a,b,c)$).
\begin{equation}\label{exceptions}
\{2, 3, 4, 5, 9, 7, 11, 13, 16, 19, 23, 25, 29, 31, 37, 41, 43, 49, 61, 67, 71, 73, 79, 121, 127, 151, 211 \}
\end{equation}
Note that this list of exceptions agrees with that
of Chou et al.~\cite{Chou} (after noting that they only considered $q$ odd).

The outline of this paper is as follows. In \S \ref{opera} we develop the necessary character sum machinery. In \S \ref{dance} we use a sieve to prove that there are at most 1528 exceptions to Theorem \ref{plate}.  We introduce a more refined sieve in \S \ref{theatre} to reduce the number of possible exceptions to 1453. We then turn to computation in \S \ref{gallery} to complete the proof of Theorem \ref{plate}.

\section{A cast of character sums}\label{opera}
Given a positive integer $m$, let $\omega(m)$ denote the number of distinct prime factors of $m$ so that $W(m)=2^{\omega(m)}$ is the
number of square-free divisors of~$m$. Also, let $\theta(m)=\prod_{p\mid m}(1-p^{-1})$.  For any integer $m$ define its
radical $\mathrm{Rad}(m)$ as the product of all distinct prime factors\ of~$m$.

Let $e$ be a divisor of $q-1$. Call $g \in \mathbb{F}_{q}$ \emph{$e$-free} if $g \neq 0$ and
$g = h^d$, where $h \in \mathbb{F}_{q}$ and $d\mid e$, implies $d=1$. The notion of $e$-free depends
(among divisors of $q-1$) only on $\mathrm{Rad}(e)$. Moreover, in this terminology a primitive
root of $\mathbb{F}_{q}$ is a $(q-1)$-free element.

The definition of any multiplicative character $\chi$  on
$\mathbb{F}_{q}^\times$ is extended to the whole of $\mathbb{F}_{q}$ by setting $\chi(0)=0$.
For any divisor $d$ of $\phi(q-1)$, there are $\phi(d)$
characters of order
$d$, a typical character being denoted by $\chi_d$.  In particular $\chi_1$, the principal
character, takes the value $1$ at all non-zero elements of $\mathbb{F}_{q}$ (whereas $\chi_1(0)=0$).
 A convenient shorthand notation to be employed for any divisor
 $e$ of $q-1$ is
\[
  \int_{d\mid e} = \sum_{d\mid e}\frac{\mu(d)}{\phi(d)} \sum_{\chi_{d}},
\]
where the sum over $\chi_{d}$ is the sum over all $\phi(d)$ multiplicative characters $\chi_d$ of
$\mathbb{F}_{q}$ of exact order $d$.  Its significance is that, for any $g \in \mathbb{F}_{q}$,
 \[\theta(e)\int_{d\mid e}\chi_d(g) =\left\{ \begin{array}{cl}
 1, & \mbox{if }  g \mbox{ is non-zero and }  e\mbox{-free},\\
0, & \mbox{otherwise.}
\end{array} \right. \]
In this expression (and throughout) only characters $\chi_d$  with $d$ square-free contribute (even if $e$ is not square-free).

Let $e_1, e_2$ be divisors of $q-1$.  Given a triple $(a,b,c)$ satisfying (\ref{jelly}) define $N(e_1,e_2)$ to be the number of $e_1$-free elements
  $g \in  \mathbb{F}_{q}^\times$  such that
$Q(g)$ is an  $e_2$-free element in  $\mathbb{F}_{q}^\times$.
We wish to investigate when $N(q-1,q-1)$ is positive. The value of $N(e_1,e_2)$ can be expressed explicitly in terms of character sums over
$\mathbb{F}_{q}$ as follows. We have
\begin{equation}\label{char}
  N(e_1,e_2)= \theta(e_1)\theta(e_2) \int_{d_1\mid e_1} \int_{d_2\mid e_2}
    S(\chi_{d_1},\chi_{d_2}),
\end{equation}
where
\begin{equation}\label{char1}
 S(\chi_{d_1},\chi_{d_2}) = \sum_{g \in  \mathbb{F}_{q}} \chi_{d_1}(g)\chi_{d_2}(Q(g))
 \end{equation}
 with
  $Q(x)= ax^2+bx+c$. We first estimate $ S(\chi_{d_1},\chi_{d_2}) $ according as the triple $(a, b, c)$ satisfies the former or latter condition in (\ref{jelly}).
 \begin{lem} \label{Sest} Assume the triple  $(a,b,c) \in  \mathbb{F}_{q}$ satisfies $(\ref{jelly})$.
 Suppose $e_1, e_2$ are divisors  of $q-1$, $d_1\mid e_1$ and $d_2\mid e_2$.  In  $(\ref{char1})$, if $d_1=d_2=1$ then $S(\chi_1,\chi_1) \geq q-3$.
 Otherwise,
 \[  |S(\chi_{d_1},\chi_{d_2})| \leq \left\{
 \begin{array}{cl}
  2, & \mbox{ if } d_1 > 1, d_2=1,\\
 \sqrt{q} +1 , &  \mbox{ if } d_1 = 1, d_2>1,\\
 2 \sqrt{q}, & \mbox{ if } d_1 >1, d_2>1.
 \end{array}
 \right.
  \]
  \end{lem}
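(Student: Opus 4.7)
The plan is to handle each of the four combinations of $(d_1,d_2)$ separately, using only the trivial bound in the first two cases and Weil's theorem on multiplicative character sums in the remaining two.

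The case $d_1 = d_2 = 1$ is immediate: then $\chi_1(g)\chi_1(Q(g))$ equals $1$ when both $g\neq 0$ and $Q(g)\neq 0$, and equals $0$ otherwise. Since $Q$ has at most two roots in $\mathbb{F}_q$ (the discriminant being nonzero), there are at least $q-3$ values of $g$ contributing, giving $S(\chi_1,\chi_1)\geq q-3$. For the case $d_1>1$, $d_2=1$, I would write $S(\chi_{d_1},\chi_1)=\sum_{g\in \mathbb{F}_q}\chi_{d_1}(g)-\sum_{g:\,Q(g)=0}\chi_{d_1}(g)$; the first sum vanishes by nontriviality of $\chi_{d_1}$ and the second has at most two terms, each of modulus $\leq 1$, yielding the bound~$2$.

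For the case $d_1=1$, $d_2>1$, I would first replace $S$ by $\sum_{g\in\mathbb{F}_q}\chi_{d_2}(Q(g))-\chi_{d_2}(Q(0))$. Since $b^2-4ac\neq 0$ forces $Q$ to split over $\overline{\mathbb{F}_q}$ into two \emph{distinct} linear factors, $Q$ is not a $d_2$-th power, so Weil's bound gives $\bigl|\sum_{g}\chi_{d_2}(Q(g))\bigr|\leq (2-1)\sqrt{q}=\sqrt{q}$. Adding the trivial bound on $\chi_{d_2}(Q(0))$ yields $\sqrt{q}+1$, as required.

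The main step, and where I expect the only real work, is the case $d_1>1$, $d_2>1$. Here I would combine the two characters into one: let $d=\mathrm{lcm}(d_1,d_2)$ and fix a character $\chi$ of order~$d$ with $\chi^{d/d_1}=\chi_{d_1}$, $\chi^{d/d_2}=\chi_{d_2}$. Then $\chi_{d_1}(g)\chi_{d_2}(Q(g))=\chi\bigl(g^{d/d_1}Q(g)^{d/d_2}\bigr)$, so I would apply the standard Weil bound $|\sum_{g}\chi(f(g))|\leq (m-1)\sqrt{q}$, where $m$ is the number of distinct roots of $f(x)=x^{d/d_1}Q(x)^{d/d_2}$, provided $f$ is not a $d$-th power in $\overline{\mathbb{F}_q}[x]$. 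The subtle point is the case $c=0$: then $x\mid Q(x)$ and the factorization of $f$ collapses to $\mathrm{const}\cdot x^{d/d_1+d/d_2}(x-\beta)^{d/d_2}$ with $\beta\neq 0$ (using $b^2-4ac\neq 0$), giving only $m=2$ distinct roots but still a non-$d$th-power because $d/d_2<d$; hence the Weil bound is~$\sqrt{q}$. When $c\neq 0$ the three linear factors $x$, $x-\alpha$, $x-\beta$ are distinct with exponents $d/d_1,d/d_2,d/d_2$ each less than $d$, so $m=3$ and Weil yields~$2\sqrt{q}$. Either way $|S|\leq 2\sqrt{q}$, completing the lemma.
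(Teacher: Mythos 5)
Your case analysis matches the paper's intent---the paper's own proof is a one-line citation of Weil's theorem \cite[Thm 5.41]{Lidl}, ``taking into account (up to) 3 zeros of $gQ(g)$''---and your first three cases are correct and complete. The only problem is in the main case $d_1,d_2>1$: the character $\chi$ of order $d=\mathrm{lcm}(d_1,d_2)$ with $\chi^{d/d_1}=\chi_{d_1}$ and $\chi^{d/d_2}=\chi_{d_2}$ that you ``fix'' need not exist. The characters of order dividing $d$ form a cyclic group of order $d$; writing $\chi_{d_1}=\psi^{(d/d_1)u_1}$ and $\chi_{d_2}=\psi^{(d/d_2)u_2}$ for a generator $\psi$ (with $(u_i,d_i)=1$), your two conditions amount to $b\equiv u_1\pmod{d_1}$ and $b\equiv u_2\pmod{d_2}$ for the exponent $b$ of $\chi=\psi^b$, which is unsolvable unless $u_1\equiv u_2\pmod{\gcd(d_1,d_2)}$. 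For instance, if $d_1=d_2=3$ and $\chi_{d_2}=\chi_{d_1}^2\neq\chi_{d_1}$, no such $\chi$ exists.

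The repair is routine and does not disturb the rest of your argument: take $\chi$ to be any generator of the cyclic group of characters of order dividing $d$, and write $\chi_{d_1}=\chi^{n_1}$, $\chi_{d_2}=\chi^{n_2}$ with $0<n_1,n_2<d$. Then $\chi_{d_1}(g)\chi_{d_2}(Q(g))=\chi\bigl(g^{n_1}Q(g)^{n_2}\bigr)$ termwise (both sides vanish when $gQ(g)=0$ because $n_1,n_2>0$), and your root count goes through verbatim with $n_1,n_2$ in place of $d/d_1,d/d_2$: if $c\neq0$ the polynomial $x^{n_1}Q(x)^{n_2}$ has three distinct roots with exponents $n_1,n_2,n_2$, none divisible by $d$, and if $c=0$ it has two distinct roots with the linear factor $ax+b$ appearing to the exponent $n_2\not\equiv0\pmod d$; either way the polynomial is not (a constant times) a $d$-th power, and Weil gives $|S|\le 2\sqrt{q}$. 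With that one fix your write-up is a complete and correct expansion of the proof the paper merely sketches.
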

  \begin{proof} This follows from Weil's theorem \cite[Thm 5.41]{Lidl}, taking into account (up to) 3 zeros of $gQ(g)$.
  \end{proof}


 \begin{lem}\label{Nee} Suppose the triple $(a,b,c)$ satisfies $(\ref{jelly})$.
 Suppose that $e$ is a divisor of $q-1$ (with $e$ even if $q$ is odd).  Then
 \[ N(e,e) \geq \theta(e)^2 \left(q-2W(e)\sqrt{q}\{W(e) -3/2+1/(2W(e))\}-3W(e)  \right). \]
 \end{lem}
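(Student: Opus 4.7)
The plan is to start from the character-sum identity (\ref{char}) applied with $e_1=e_2=e$, isolate the main term corresponding to $d_1=d_2=1$, and then apply Lemma~\ref{Sest} to bound all remaining terms. Since characters with $d$ not square-free contribute nothing (their $\mu(d)$ vanishes), only the $W(e)$ square-free divisors of~$e$ matter, and for each such $d$ the weight $|\mu(d)|/\phi(d)$ times the number of characters of order~$d$ equals~$1$. Thus the triple sum naturally breaks into four pieces indexed by whether $d_1,d_2$ equal~$1$ or exceed~$1$, with respective weighted counts $1$, $W(e)-1$, $W(e)-1$, and $(W(e)-1)^{2}$.

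First I would handle the main term: since $\chi_1(g)$ vanishes only at $g=0$, and $\chi_1(Q(g))$ vanishes at the at most two roots of $Q$, condition (\ref{jelly}) forces $S(\chi_1,\chi_1)\geq q-3$, as stated in Lemma~\ref{Sest}. Next I would collect the contributions of the other three cases using the bounds $2$, $\sqrt{q}+1$, and $2\sqrt{q}$ respectively from Lemma~\ref{Sest}, obtaining a lower bound
\[
\frac{N(e,e)}{\theta(e)^{2}} \;\geq\; (q-3) \;-\; 2(W-1) \;-\; (\sqrt{q}+1)(W-1) \;-\; 2\sqrt{q}\,(W-1)^{2},
\]
where $W=W(e)$.

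Finally I would simplify. Grouping the constant terms gives $-3 - 3(W-1) = -3W$, and grouping the $\sqrt{q}$ terms gives $-\sqrt{q}(W-1)(1+2(W-1)) = -\sqrt{q}(W-1)(2W-1)$. Factoring the latter as $\sqrt{q}(2W^{2}-3W+1) = 2W\sqrt{q}\{W - 3/2 + 1/(2W)\}$ yields exactly the claimed inequality.

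The only subtle point is making sure no square-free divisor is double-counted and that the $d=1$ term is correctly isolated before applying the Weil-type bounds (in particular, that one does not accidentally apply the $|S|\leq 2\sqrt{q}$ bound to the main term); the rest is bookkeeping. I do not expect any genuine obstacle, as the hypothesis on~$(a,b,c)$ from (\ref{jelly}) is exactly what is needed for Lemma~\ref{Sest} to apply uniformly, and the parity assumption on~$e$ is not used in this particular estimate.
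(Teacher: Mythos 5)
Your proposal is correct and follows exactly the paper's own proof: apply Lemma~\ref{Sest} termwise to the expansion \eqref{char} with $e_1=e_2=e$, noting that each square-free divisor contributes total weight $1$, which gives $N(e,e)/\theta(e)^2\geq q-3-2(W-1)^2\sqrt{q}-(W-1)(\sqrt{q}+1)-2(W-1)$, and the stated form follows by the algebraic regrouping you describe. No issues.
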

 \begin{proof} Write $W=W(e)$.   Applying the estimates  of Lemma \ref{Sest} to (\ref{char}) and (\ref{char1}) we obtain

 \[ N(e,e) \geq \theta(e)^2 (q-3-(W-1)^22\sqrt{q} -(W-1)(\sqrt{q}+1)- 2(W-1) )\]
and the result follows.
%
\end{proof}
We take $e=q-1$ to obtain a basic criterion to guarantee that $N(q-1,q-1)>0$.   In this situation the minor savings within Lemma \ref{Nee} are insignificant and are ignored.
 \begin{thm}\label{basic}
 Suppose the triple $(a,b,c)$ satisfies $(\ref{jelly})$. If $q > 4W(q-1)^4$
 then there exists a primitive root $g$ such that $ag^{2} + bg + c$ is also a primitive root.
 \end{thm}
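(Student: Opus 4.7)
The plan is to apply Lemma~\ref{Nee} directly with $e = q-1$, obtain a lower bound for $N(q-1,q-1)$ in terms of $W := W(q-1)$, and then check that this bound is strictly positive whenever $q > 4W^4$. Note that the parity hypothesis in Lemma~\ref{Nee} is automatically satisfied, since $q-1$ is even whenever $q$ is odd.

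Carrying out the first step, after expanding $2W(W - 3/2 + 1/(2W)) = 2W^2 - 3W + 1$, Lemma~\ref{Nee} yields
\[
N(q-1,q-1) \geq \theta(q-1)^2 \bigl( q - (2W^2 - 3W + 1)\sqrt{q} - 3W \bigr).
\]
Since $\theta(q-1)^2 > 0$, the problem reduces to showing the bracketed expression is positive. The hypothesis $q > 4W^4$ is equivalent to $\sqrt{q} > 2W^2$, which immediately gives $q > 2W^2 \sqrt{q}$, dominating the leading $-2W^2\sqrt{q}$ term. The residual corrections $+(3W-1)\sqrt{q} - 3W$ are non-negative in the relevant range (a one-line check using $\sqrt{q} > 2W^2 \geq 2$ for $W \geq 1$), so the bracket is positive and $N(q-1,q-1) > 0$, from which the existence of the desired primitive root follows.

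There is essentially no obstacle here: all of the substantive character-sum work has been carried out in Lemmas~\ref{Sest} and~\ref{Nee}, and the only genuine choice is how much precision to sacrifice for a clean criterion. The remark preceding the statement — that ``the minor savings within Lemma~\ref{Nee} are insignificant and are ignored'' — signals exactly this trade-off: the coefficient $2W^2 - 3W + 1$ is relaxed to $2W^2$ and the additive term $3W$ is absorbed, producing the neat bound $q > 4W^4$ that will feed into the sieve estimates of the following sections.
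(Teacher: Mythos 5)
Your proposal is correct and is exactly the argument the paper intends: the paper states Theorem~\ref{basic} as an immediate consequence of Lemma~\ref{Nee} with $e=q-1$, remarking only that "the minor savings within Lemma~\ref{Nee} are insignificant and are ignored." Your explicit verification that the leftover terms $(3W-1)\sqrt{q}-3W$ are non-negative under $\sqrt{q}>2W^2$ fills in the one small check the paper omits, and the parity remark about $q-1$ is also handled correctly.
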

  The condition in Theorem \ref{basic} is automatically satisfied if $\omega(q-1) \geq 17$.  Hence we may assume $\omega(q-1) \leq 16$
 and $q <7.37 \times 10^{19}$. To obtain an improvement on Theorem \ref{basic} we proceed, as in \cite{COT}, to introduce a sieving technique.

\section{Introducing the sieve}\label{dance}

 Let $e$ be a divisor of $q-1$. In practice, this
\emph{kernel} $e$ will be chosen such that  $\mathrm{Rad}(e)$ is the product of the smallest
primes in $q-1$.   In particular, if $q$ is odd, then certainly $e$ is even. If $\mathrm{Rad}(e) =\mathrm{Rad}(q-1)$,
then set $s=0$ and $\delta =1$.  Otherwise, if $\mathrm{Rad}(e) < \mathrm{Rad}(q-1)$
 let $p_1, \ldots, p_s$, $s \geq 1$, be the primes dividing $q-1$ but not $e$ and
 set $\delta=1-\sum_{i=1}^s 2p_i^{-1}$.
In practice, it is essential to choose $e$ so that $\delta >0$. We first borrow a result from \cite{COT}.
\begin{lem}[Lemma 1 \cite{COT}]\label{sieve} Suppose the triple $(a,b,c)$ satisfies $(\ref{jelly})$.
  Suppose $e$ is a divisor of $q-1$. Then, in the above notation,
  \begin{equation*}
    N(q-1,q-1) \geq \sum_{i=1}^sN(p_ie,e)+ \sum_{i=1}^sN(e,p_ie)-(2s-1)N(e,e).
  \end{equation*}
  Hence
  \begin{equation}\label{sieveeq2}
    N(q-1,q-1) \geq \sum_{i=1}^s\{[N(p_ie,e)-\theta(p_i)N(e,e)]+ [N(e,p_ie)-\theta(p_i)N(e,e)]\}+
    \delta N(e,e).
  \end{equation}
\end{lem}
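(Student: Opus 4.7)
The plan is to derive the stated bound from a standard Bonferroni-type inclusion–exclusion, exploiting the fact that being $(q-1)$-free decomposes nicely across the primes $p_1,\dots,p_s$ dividing $q-1$ but not $e$. The crucial observation is that, for any divisor $e$ of $q-1$, the property of being $(q-1)$-free is equivalent to being simultaneously $e$-free and $p_i$-free for each $i=1,\dots,s$; equivalently, $(q-1)$-freeness is the conjunction of the $s$ conditions "$p_i e$-free" (each of which already implies $e$-freeness, since $\gcd(p_i,e)=1$ and so $\mathrm{Rad}(p_i e) = p_i\mathrm{Rad}(e)$).

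With this in hand, I would set up the sieve as follows. Let $A$ be the set of $g\in\F_q^\times$ counted by $N(e,e)$, i.e.\ those $g$ that are $e$-free and with $Q(g)$ non-zero and $e$-free. For each $i$, let $B_i\subseteq A$ be the subset where $g$ fails to be $p_i$-free and $C_i\subseteq A$ the subset where $Q(g)$ fails to be $p_i$-free. Then the set counted by $N(q-1,q-1)$ is exactly $A\setminus\bigl(\bigcup_i B_i\cup\bigcup_i C_i\bigr)$, and the union bound (i.e., the first step of Bonferroni) yields
\[
N(q-1,q-1)\;\geq\;|A|-\sum_{i=1}^s|B_i|-\sum_{i=1}^s|C_i|.
\]
Since $|B_i|=N(e,e)-N(p_i e,e)$ and $|C_i|=N(e,e)-N(e,p_i e)$, substituting gives the first displayed inequality, with the coefficient $-(2s-1)$ on $N(e,e)$ arising as $1-2s$.

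For the second form of the bound, I would simply rearrange: add and subtract $\theta(p_i)N(e,e)$ inside each summand, so that
\[
\sum_{i=1}^s N(p_i e,e)+\sum_{i=1}^s N(e,p_i e)-(2s-1)N(e,e)
\]
becomes
\[
\sum_{i=1}^s\{[N(p_i e,e)-\theta(p_i)N(e,e)]+[N(e,p_i e)-\theta(p_i)N(e,e)]\}+\Bigl(2\sum_{i=1}^s\theta(p_i)-(2s-1)\Bigr)N(e,e).
\]
Using $\theta(p_i)=1-p_i^{-1}$, the bracketed coefficient of $N(e,e)$ simplifies to $1-2\sum_{i=1}^s p_i^{-1}=\delta$, which reproduces (\ref{sieveeq2}).

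There is no real analytic obstacle here: the inequality is purely combinatorial, and the only point requiring care is verifying that $p_i e$-freeness is equivalent to the conjunction of $e$-freeness and $p_i$-freeness (so that the "bad" events $B_i$ and $C_i$ are correctly counted as the complements of $N(p_i e,e)$ and $N(e,p_i e)$ inside $N(e,e)$). The subsequent algebraic rearrangement that produces $\delta$ is routine, and the fact that $N(e,e)\geq 0$ is why one only needs $\delta>0$ (rather than $\delta=1$) for the bound to be useful in applications.
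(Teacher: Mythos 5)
Your proof is correct and is essentially the standard argument: the paper itself gives no proof but cites Lemma 1 of \cite{COT}, whose derivation is exactly this inclusion--exclusion (union bound) based on the observation that $(q-1)$-freeness is the conjunction of $e$-freeness and $p_i$-freeness for $i=1,\dots,s$, followed by the routine rearrangement producing $\delta$.
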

We now proceed to use Lemma \ref{Sest} to bound the terms appearing in (\ref{sieveeq2}).
\begin{lem}\label{t:N}
Suppose the triple $(a,b,c)$ satisfies $(\ref{jelly})$.
  Let $l$ be a prime dividing  $q-1$ but not $e$. Then
  \begin{equation}\label{Neq2}
    |N(e,le) -\theta(l)N(e,e)| \leq (1-1/l)\theta(e)^{2}(2W(e)^2 \sqrt{q}- W(e)(\sqrt{q}-1)).
  \end{equation}
  and
  \begin{equation}\label{Neq3}
  |N(le,e) -\theta(l)N(e,e)| \leq (1-1/l)\theta(e)^{2}(2W(e)^2\sqrt{q}- 2W(e)(\sqrt{q}-1)).
  \end{equation}
\end{lem}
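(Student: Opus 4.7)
The plan is to expand $N(e,le)$ via the character-sum formula (\ref{char}) and then isolate the piece that differs from $\theta(l)N(e,e)$. Since $l$ does not divide $e$, we have $\theta(le) = \theta(l)\theta(e)$, and every square-free divisor of $le$ either divides $e$ or factors as $l\cdot d_2'$ with $d_2'\mid e$. Splitting the $d_2$-integral accordingly gives
\[
N(e,le) = \theta(l)\,N(e,e) + \theta(l)\theta(e)^2 \int_{d_1\mid e}\sum_{\substack{d_2\mid le\\ l\mid d_2}}\frac{\mu(d_2)}{\phi(d_2)}\sum_{\chi_{d_2}} S(\chi_{d_1},\chi_{d_2}),
\]
so it remains to bound the second term in absolute value by the right-hand side of (\ref{Neq2}).

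For the bound, I would observe that in the remainder every $d_2$ satisfies $d_2 > 1$ (because $l\mid d_2$), so Lemma~\ref{Sest} applies in the non-trivial cases. For each fixed $d_1$, the inner sum over $d_2=ld_2'$ with $d_2'\mid e$ square-free has $\phi(d_2)=(l-1)\phi(d_2')$ characters of order $d_2$, each contributing a term of size at most $\sqrt{q}+1$ (if $d_1=1$) or $2\sqrt{q}$ (if $d_1>1$). Since the weight $|\mu(d_2)|/\phi(d_2)$ exactly cancels the character count, the $d_2$-sum is bounded by $W(e)(\sqrt{q}+1)$ or $W(e)\cdot 2\sqrt{q}$ respectively. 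Running the same argument over $d_1$ — one term from $d_1=1$ and $W(e)-1$ terms from $d_1>1$ — yields
\[
W(e)(\sqrt{q}+1) + 2W(e)(W(e)-1)\sqrt{q} = 2W(e)^2\sqrt{q} - W(e)(\sqrt{q}-1),
\]
which, multiplied by the prefactor $\theta(e)^2(1-1/l)$, is exactly (\ref{Neq2}).

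The proof of (\ref{Neq3}) follows the same template with the roles of $d_1$ and $d_2$ swapped: write $N(le,e) - \theta(l)N(e,e) = (1-1/l)\theta(e)^2 \int_{d_2\mid e}\bigl(\text{sum over }d_1\mid le,\ l\mid d_1\bigr)$. The only genuine change is in the $d_2=1$ slice, where Lemma~\ref{Sest} gives the sharper estimate $|S(\chi_{d_1},\chi_1)|\le 2$ (since $d_1>1$ and $d_2=1$) instead of $\sqrt{q}+1$. Carrying through the same arithmetic produces $2W(e) + 2W(e)(W(e)-1)\sqrt{q} = 2W(e)^2\sqrt{q}-2W(e)(\sqrt{q}-1)$, which matches (\ref{Neq3}).

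I do not anticipate any serious obstacle: the argument is a standard sieve-remainder computation, and Lemma~\ref{Sest} has already done the analytic work. The only point that requires a bit of care is the bookkeeping — checking that $l\nmid e$ forces $\gcd(l,d_2')=1$ so that $\mu(ld_2')\ne 0$ precisely when $\mu(d_2')\ne 0$, and that the factor $\phi(ld_2')=(l-1)\phi(d_2')$ cancels cleanly against the coefficient in the integral notation, leaving a count of exactly $W(e)$ surviving square-free divisors.
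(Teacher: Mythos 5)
Your proposal is correct and follows essentially the same route as the paper: expand $N(e,le)$ (resp.\ $N(le,e)$) via \eqref{char}, identify the discrepancy from $\theta(l)N(e,e)$ as the contribution of characters whose order is divisible by $l$, and bound it term by term with Lemma~\ref{Sest}, using that there are $W(le)-W(e)=W(e)$ such square-free orders and that the weight $\mu(d)/\phi(d)$ cancels the $\phi(d)$ characters of each order. The arithmetic in both cases matches the paper's exactly.
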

\begin{proof} From (\ref{char})
\[ N(le,e)- \theta(l)N(e,e)= \theta(le)\theta(e) \int_{d_1\mid e}
\int_{d_2\mid e}S(\chi_{ld_1},\chi_{d_2}). \]
Hence, by Lemma \ref{Sest}
 \[
\begin{array}{rl}
 |N(e,le)- \theta(l)N(e,e)| & \leq \theta(l)\theta(e)^2\left((W(le)-W(e))(W(e) -1)2 \sqrt{q} +( W(le)-W(e))(\sqrt{q}+1)\right)\\
 &=  (1-\frac{1}{l})\theta(e)^2\left(2W(e)(W(e)-1)\sqrt{q}+W(e)(\sqrt{q} +1)\right),
 \end{array} \]
 since $W(le) =2W(e)$ and $\theta(l) = 1-1/l$;  (\ref{Neq2}) follows.

 Similarly,
  \[
 |N(le,e)- \theta(l)N(e,e)|  \leq \theta(l)\theta(e)^2\left(W(e)(W(e) -1)2 \sqrt{q} + 2W(e)\right)\]
 and (\ref{Neq3}) follows.

\end{proof}

\begin{thm}\label{t:Cohen}
  Let $q$ be a prime power. Suppose the triple $(a,b,c) \in \mathbb{F}_q$  satisfies $(\ref{jelly})$. 
  Let $p_1,\ldots,p_s$, $s\geq 1$, be the primes dividing
  $q-1$ but not~$e$ and set $\delta=1-2\sum_{i=1}^s p_i^{-1}$. Suppose that $\delta$
  is positive and that
  \begin{equation}\label{e:Cohen}
    q>\left\{\left(\frac{2s-1}{\delta}+2\right)\left(2W(W-3/2)+\frac{3W}{2\sqrt{q}}\right)+1+\frac{3W}{2\sqrt{q}}\right\}^2,
  \end{equation}
  where $W=W(e)$.
  Then there is a primitive root $g$ such that $ag^{2} + bg + c$ is also primitive.

\end{thm}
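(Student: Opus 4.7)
The plan is to combine the three preparatory estimates --- the sieve inequality (Lemma \ref{sieve}), the lower bound on $N(e,e)$ (Lemma \ref{Nee}), and the deviation bounds of Lemma \ref{t:N}) --- into a single lower bound for $N(q-1,q-1)$ and to show that this bound is strictly positive whenever (\ref{e:Cohen}) holds; the existence of a primitive root $g$ with $Q(g)$ also primitive then follows immediately.

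First I would rewrite the conclusion of Lemma \ref{sieve} so as to isolate a clean main term $\delta N(e,e)$. The key algebraic identity is
\[
  2\sum_{i=1}^s\theta(p_i)-(2s-1) \;=\; 1-2\sum_{i=1}^s p_i^{-1} \;=\; \delta,
\]
which permits Lemma \ref{sieve} to be recast as
\[
  N(q-1,q-1) \;\geq\; \delta N(e,e) + \sum_{i=1}^s\bigl[N(p_ie,e)-\theta(p_i)N(e,e)\bigr] + \sum_{i=1}^s\bigl[N(e,p_ie)-\theta(p_i)N(e,e)\bigr].
\]

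I would then apply the triangle inequality together with the two estimates of Lemma \ref{t:N} to majorise the residual sums, and use Lemma \ref{Nee} to minorise $N(e,e)$. Factoring out the common $\theta(e)^2>0$, this yields a lower bound of the shape $N(q-1,q-1)\geq\theta(e)^2\,\Phi(q,W,s,\delta)$ for a completely explicit $\Phi$ depending on $q$, $\sqrt{q}$, $W=W(e)$, the number $s$ of omitted primes, and (through $\sum_i p_i^{-1}=(1-\delta)/2$) the sieve parameter $\delta$.

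The final step is to verify that $\Phi>0$ under the hypothesis (\ref{e:Cohen}). I expect the main difficulty to lie in precisely this algebraic reorganisation: using $\sum_i(1-p_i^{-1})=s-(1-\delta)/2$ to absorb the dependence on the $p_i$ into a single factor $(2s-1)/\delta$, dividing through by $\delta$, and collecting terms of orders $q$, $\sqrt{q}$, $W^2\sqrt{q}$, $W$, and $W/\sqrt{q}$, one has to steer the inequality into the prescribed form
\[
  \sqrt{q} \;>\; \left(\frac{2s-1}{\delta}+2\right)\!\left(2W(W-3/2)+\frac{3W}{2\sqrt{q}}\right) + 1 + \frac{3W}{2\sqrt{q}},
\]
so that squaring recovers (\ref{e:Cohen}) exactly. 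The appearance of the combinatorial coefficient $(2s-1)/\delta+2$ reflects both the doubling of the residual terms across the two $p_i$-sums and the absorption of the $\delta N(e,e)$ main term; the inner bracket $2W(W-3/2)+3W/(2\sqrt{q})$ encodes, after pulling out a factor of $\sqrt{q}$, the sum of the two bounds in Lemma \ref{t:N}. With this arithmetic verified, $\Phi>0$ gives $N(q-1,q-1)>0$, completing the proof.
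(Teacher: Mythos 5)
Your proposal is correct and is essentially identical to the paper's own argument: the paper likewise starts from the recast sieve inequality (\ref{sieveeq2}), inserts Lemma \ref{Nee} for the main term $\delta N(e,e)$ and Lemma \ref{t:N} for the two deviation sums, factors out $\delta\theta(e)^2\sqrt{q}$, and uses $\sum_{i=1}^s 2(1-1/p_i)=2s-1+\delta$ to collect everything into the coefficient $\frac{2s-1}{\delta}+2$ times the bracket $2W(W-3/2)+\frac{3W}{2\sqrt{q}}$ plus $1+\frac{3W}{2\sqrt{q}}$, whose square is (\ref{e:Cohen}). No genuinely different route is taken.
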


\begin{proof}

  Assume $\delta>0$.  Write $N$ for $N(q-1,q-1)$ and $W$ for  $W(e)$.    From~(\ref{sieveeq2}) and  Lemmas \ref{Nee} and \ref{t:N}
  \begin{eqnarray}\label{pip}
    N & \geq & \theta(e)^2\left\{\delta\left(q-\left(2W(W-\frac{3}{2})+1+\frac{3W}{\sqrt{q}}\right)\sqrt{q}\right)-
                        \sum_{i=1}^s 2\left(1-\frac{1}{p_{i}}\right)\left(2W(W-\frac{3}{2})+\frac{3W}{2\sqrt{q}}\right)\sqrt{q}\right\} \nonumber\\
               & =    & \delta \theta(e)^2\sqrt{q}
    \left\{\sqrt{q}-\left(2W(W-\frac{3}{2})+1+\frac{3W}{\sqrt{q}}\right)-\left(\frac{2s-1}{\delta}+1\right)\left(2W(W-\frac{3}{2}) +\frac{3W}{2\sqrt{q}}\right)\right\}.
  \end{eqnarray}
  The conclusion follows.
\end{proof}
We now illustrate the utility of Theorem \ref{t:Cohen}. Recall that Theorem \ref{basic} implies that we need only consider those $q$ satisfying $\omega(q-1) \leq 16$.  A simple computation shows that, for $9 \leq \omega(q-1) \leq 16$ and $s=5$, the inequality in (\ref{e:Cohen}) is satisfied.
As an example, consider the case
$\omega(q-1)=9$, so that $q\geq 2\cdot3\cdot5\cdot7\cdot11\cdot13\cdot17\cdot19\cdot23+1=223,092,871$.
 For $s=5$ we have $W=W(e)=16$ and
$\delta\geq 1-2\bigl(\frac{1}{11}+\frac{1}{13}+\frac{1}{17}+\frac{1}{19}+\frac{1}{23}\bigr)$.
Therefore the right hand side of~(\ref{e:Cohen}) is at most $161,546,452$.  Hence there is bound to be a representation of the form
 $(\ref{pumpkin})$.

We now consider $1\leq \omega(q-1) \leq 8$ following the procedure in \S 2 of \cite{COT}.
Consider $\omega(q-1) = 8$: there is no value of $s\in[1,7]$ for which (\ref{e:Cohen}) is true. Nevertheless, we find that $s=5$ gives the smallest bound for the right-hand side of (\ref{e:Cohen}). For $s=5$ we have $W=W(e)=8$ and
$\delta\geq 1-2\bigl(\frac{1}{7}+\frac{1}{11}+\frac{1}{13}+\frac{1}{17}+\frac{1}{19}\bigr)$.
Therefore the right hand side of~(\ref{e:Cohen}) is at most $38,228,191$. We now enumerate the values of $q$ that require checking: that is, those $q$ satisfying $q\leq 38,228,191$ with $q$ a prime power and $\omega(q-1) = 8$. We find that there are 23 such values of $q$. For each of these values of $q$ we list the prime factorisation, finding the exact value of $\delta$ in each case. With this tailored approach we apply  Theorem \ref{t:Cohen} once more. We find that only 5 values of $q$ do not satisfy (\ref{e:Cohen}).

We continue in this way for\footnote{When $\omega(q-1) = 0$ we have $q=2$, whence (\ref{pumpkin}) is clearly false for $a=b=1$ and $c=0$.} $\omega(q-1) \leq 7$. In Table \ref{t:bounds} we collect our results. We run the above procedure over all values of $s\in[1, \omega(q-1)-1]$. The second column indicates the largest element in the final list. The third column contains the final number of elements of these lists, discriminating primes (on the left of the summation sign) and prime powers (on the right of the summation sign).

We have a total of 1528 possible exceptions to Theorem \ref{plate}. In the next section we introduce the modified prime sieve, which, as can be seen in the fourth column of Table \ref{t:bounds}, reduces the number of possible exceptions to 1453.

\begin{table}[ht]
  \caption{Improved bounds for $q$}
  \label{t:bounds}
  \centering
  \begin{tabular}{ccccccc}
    \hline\hline
    $\omega(q-1)$ &  Largest $q$ & Final list size (primes + prime powers) & MPS list size \\
    \hline
    $8$ &      $18888871$ &    $5+0$ & $5+0$  \\
    $7$ &    $8678671$ &   $104+1$ & $104+1$  \\
    $6$  &   $2402401$ &  $417+6$ & $403 + 6$  \\
    $5$ &    $591361$ & $477+11$ & $464 + 11$ \\
    $4$ &   $52501$ &  $378+20$ & $331 + 20$ \\
    $3$ &   $4861$ &    $73+9$ & $73 + 9$  \\
    $2$ 	& $109$   & $14 + 6$ & $14 + 5$	\\
        $1$ 	& $32$   &	$3+4$ & $3 + 4$						\\
        \textbf{Total} & & $1471 +57$ & $1397 + 56$  \\
    \hline\hline
  \end{tabular}
\end{table}

\section{The modified prime sieve}\label{theatre}
We modify the notation and argument used in Theorem $\ref{t:Cohen}$ and follow the approach of \cite{Bailey}.  Suppose that  $\mathrm{Rad}(q-1)$ is written as $ePL$, where $e$ is a divisor of $q-1$ and that for $s\geq 1$ we have
$P=p_1\cdots p_s$ is a product of distinct primes ({\em the main sieving primes}) and $L=l_1\cdots l_r (r\geq 1)$ ({\em the large primes}). In practice,
$e$ is the product of the smallest primes in $q-1$ that cannot be used as sieving primes and $L$ involves those primes that are somewhat larger than the rest (if there are any).
Write $m=\theta(e)$ and $W(e)$.  Define $\delta=1- 2\sum_{i=1}^s\frac{1}{p_i}$ and $\varepsilon= \sum_{j=1}^r \frac{1}{l_j}$.
\begin{thm} \label{mps}
Let $e\mid q-1$ as in Theorem $\ref{t:Cohen}$ and   define  $\mathrm{Rad}(q-1)=ePL, \ \delta, \varepsilon, W$, as above.     Assume $m^2\delta> 2 \varepsilon$, where
$m=\theta(e)$.  Suppose
$$\sqrt{q} > \frac{m^2(2s-1+2\delta) \left(2W(W-\frac{3}{2})+\frac{3W}{2\sqrt{q}}\right)+\delta+r-\varepsilon+\frac{1}{\sqrt{q}}\left(\frac{3m^2\delta W}{2}+2r-\varepsilon\right)}{m^2\delta-2\varepsilon}.$$
Then there is a primitive root $g$ such that $ag^{2} + bg + c$ is also primitive.
\end{thm}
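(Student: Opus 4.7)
The plan is to follow the architecture of Theorem \ref{t:Cohen}, but apply the sieve of Lemma \ref{sieve} using \emph{all} primes $p_1,\dots,p_s,l_1,\dots,l_r$ dividing $q-1$ but not $e$, and then handle the two classes of sieving primes by different means. Writing
\[
\mathrm{err}(\pi)=[N(\pi e,e)-\theta(\pi)N(e,e)]+[N(e,\pi e)-\theta(\pi)N(e,e)]
\]
for each sieving prime $\pi$, a direct rearrangement of Lemma \ref{sieve} gives
\[
N(q-1,q-1)\;\geq\;(\delta-2\varepsilon)\,N(e,e)+\sum_{i=1}^{s}\mathrm{err}(p_i)+\sum_{j=1}^{r}\mathrm{err}(l_j),
\]
which already explains why the denominator in the conclusion is $m^2\delta-2\varepsilon$ once $N(e,e)$ is replaced by its bound from Lemma \ref{Nee}.

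For the main sieving primes $p_i$, I would estimate $\mathrm{err}(p_i)$ by Lemma \ref{t:N} exactly as in the proof of Theorem \ref{t:Cohen}; summed over $i$ and regrouped with the Lemma \ref{Nee} contribution as in the derivation of~(\ref{pip}), this produces the $(2s-1+2\delta)m^2(2W(W-3/2)+3W/(2\sqrt{q}))$ piece of the numerator. For the large primes $l_j$, however, Lemma \ref{t:N} would retain the costly $O(W^2\sqrt{q})$ factor with only a negligible $(1-1/l_j)\approx 1$ saving, so I would instead estimate $\mathrm{err}(l_j)$ by elementary means. Since the $l_j$-th powers in $\mathbb{F}_q^\times$ form a subgroup of size $(q-1)/l_j$, we have $N(e,e)-N(l_j e,e)\leq (q-1)/l_j$; and a short Weil-bound computation applied to $\sum_{g}\chi(Q(g))$ for nontrivial characters $\chi$ of order dividing $l_j$ yields $N(e,e)-N(e,l_j e)\leq (q-1)/l_j+O(\sqrt{q})$. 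Summed over $j$ these contribute roughly $2\varepsilon N(e,e)-2\varepsilon(q-1)-O(r\sqrt{q})$; the first summand cancels the $-2\varepsilon N(e,e)$ introduced by the sieve, while the $-2\varepsilon(q-1)$ combines with the leading $\delta m^2 q$ term from $\delta N(e,e)$ to produce the $(m^2\delta-2\varepsilon)q$ main contribution.

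Substituting the bound of Lemma \ref{Nee} for $N(e,e)$, collecting the $q$-, $\sqrt{q}$-, and constant-order terms, and demanding positivity gives an inequality of the shape $(m^2\delta-2\varepsilon)\sqrt{q}>\mathrm{RHS}$, which the hypothesis $m^2\delta>2\varepsilon$ renders meaningful. The main obstacle is not conceptual but arithmetic: matching the precise expression on the right-hand side---with its interlocking terms $\delta+r-\varepsilon$ and $3m^2\delta W/2+2r-\varepsilon$---requires careful attention to which contributions carry the factor $m^2=\theta(e)^2$ (those coming from the character-sum bounds of Lemmas \ref{Nee} and \ref{t:N}) and which do not (the elementary bounds used for the large primes), together with the correct absorption of the lower-order $3W$ term of Lemma \ref{Nee} into the $3W/(2\sqrt{q})$ format that appears throughout.
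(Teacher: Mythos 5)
Your proposal is correct in substance and would reach the stated inequality, but it organises the sieve differently from the paper. You run a single application of Lemma~\ref{sieve} with kernel $e$ over all $s+r$ sieving primes, obtaining the factor $\delta-2\varepsilon$ directly, and then recover the full $\delta$ by bounding the large-prime error terms $N(l_je,e)-\theta(l_j)N(e,e)$ and $N(e,l_je)-\theta(l_j)N(e,e)$ by elementary means: the first by counting nonzero $l_j$-th powers (at most $(q-1)/l_j$ of them), the second by a Weil estimate on $\sum_g\chi(Q(g))$ for $\chi$ of order $l_j$, which is legitimate since $b^2-4ac\ne0$ ensures $Q$ is not a constant times a perfect power. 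The paper instead splits $N(q-1,q-1)\ge N(eP,eP)+N(L,L)-N(1,1)$, reuses \eqref{pip} verbatim for $N(eP,eP)$ (so the factor $\delta$ appears untouched), and sieves $N(L,L)$ a second time with kernel $1$, where Lemma~\ref{t:N} specialises to $W(1)=1$ and yields exactly the analogues of your bounds, namely $2(1-1/l_j)$ and $(1-1/l_j)(\sqrt q+1)$; the term $-2\varepsilon(q-1)$ then arises from replacing $N(1,1)$ by $q-1$. The two routes are arithmetically equivalent --- your cancellation of $2\varepsilon N(e,e)$ against the sieve's $-2\varepsilon N(e,e)$ is precisely the paper's transfer of the large primes to the trivial kernel --- and both rest on the same key idea, that the large primes must not be charged the $W(e)^2\sqrt q$ penalty of Lemma~\ref{t:N}. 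The paper's version has the minor advantage of reusing \eqref{pip} and Lemma~\ref{t:N} without modification, while yours avoids introducing the auxiliary quantity $N(L,L)$; you correctly identify, but do not carry out, the final bookkeeping that produces the exact coefficients $2s-1+2\delta$, $\delta+r-\varepsilon$ and $\tfrac{3m^2\delta W}{2}+2r-\varepsilon$.
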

\begin{proof}  Begin with the fact that $N=N(q-1,q-1)=N(ePL,ePL)$.
Then, clearly,
\begin{equation*}
 N\geq N(eP,eP)+N(L,L)- N(1,1).
\end{equation*}
Observe that, now (\ref{pip}) serves as a lower bound for the value of $N(eP,eP)$.  Moreover,
\begin{equation} \label{mps2}
N(L,L) \geq \sum_{j=1}^r[N(l_j,1)+N(1,l_j)] -(2r-1)N(1,1),
\end{equation}
because each pair $(g,Q(g)), g \in \mathbb{F}_q^\times$ contributes 1 to the right side of (\ref{mps2}) only if both $g$ and $Q(g)$ are $l_j$-free for each $j=1, \ldots, r$,
and otherwise contributes a non-positive integer.

From (\ref{mps2}), with $\Delta=N(L,L)-N(1,1)$,
\begin{equation}\label{mps3}
\Delta \geq \sum_{j=1}^r\left[\left( N(l_j,1)-\left(1-\frac{1}{l_j}\right)N(1,1)\right)+ \left( N(1,l_j)-\left(1-\frac{1}{l_j}\right)N(1,1)\right)\right]-2 \varepsilon(q-1).
\end{equation}
 From (\ref{Neq2}) and (\ref{Neq3}), for $j=1, \ldots, r$,

\[ \left|N(l_j,1) -\left(1-\frac{1}{l_j}\right)\right| \leq 2\left(1-\frac{1}{l_j}\right) \]
and
\[ \left|N(1,l_j) -\left(1-\frac{1}{l_j}\right)\right| \leq \left(1-\frac{1}{l_j}\right)(\sqrt{q}+1). \]
It follows that (\ref{mps3}) yields
\begin{equation} \label{mps4}
N(L,L)-N(1,1) \geq 2 \varepsilon (q-1) -(r-\varepsilon) \sqrt{q}-(2r-3\varepsilon).
\end{equation}
Combining (\ref{mps4}) with the relevant version of (\ref{e:Cohen}) we obtain Theorem \ref{mps}.
\end{proof}
The modified prime sieve allows us to eliminate more values of $q$ theoretically. We fed  all outstanding $q$ with $2\leq \omega(q-1) \leq 8$, as detailed in our Table \ref{t:bounds}, into the criterion of Theorem \ref{mps}. We have listed the final tally of possible exceptions in the final column of Table~\ref{t:bounds}. The use of Theorem \ref{mps} reduces the total number of possible exceptions to 1453: this list includes the following six even values of $q$: $4,8,16,32,256, 4096$. The total impact of the modified prime sieve is about a 5\% improvement on previous work. We note that in every case $r=1$ was used to eliminate a value of $q$.

It does not seem obvious how to extend these theoretical calculations further. We now turn to computational techniques to resolve Theorem \ref{plate}.

\section{Computation}\label{gallery}
\subsection{Small values of $q$}
For each possible exception $q$, we run over all admissible combinations of $a, b, c\in \mathbb{F}_{q}$ to find an appropriate primitive root. We do this in Algorithm~\ref{algo} where we consider the equivalent $a (g^2 + b g + c)$, with $b^{2} - 4c \neq 0$.

Let $R$ be the radical of $q - 1$;
then $\gamma^k$ is primitive iff $k$ is coprime to $R$.
This property is unchanged by reduction modulo $R$; hence we need
only consider $a = \gamma^k$ with $k < R$.

\begin{algorithm}[h]
\DontPrintSemicolon
\AlgoDontDisplayBlockMarkers
\SetAlgoNoEnd
\SetAlgoNoLine
\SetKwProg{Proc}{Procedure}{}{}%
\SetKwFunction{checkbetagamma}{check\_beta\_inv\_gamma}%
\SetKwFunction{checkq}{check\_q}%
\SetKwFunction{rad}{rad}%
\SetKw{KwNext}{next}%
\SetKw{KwTo}{in}%
\Proc{\checkq{$q$}}{
    \textit{Construct $\Fq$ and primitive element $\gamma$}\;
    \For{$c \in [\gamma^j : 0 \leq j < q-1] \cup [0]$}{
	\For{$b \in [\gamma^i : 0 \leq i < q-1] \cup [0]$}{
	    \If{$b^2-4 c = 0$}{
		\KwNext $b$\;
	    }
	    \For{$a \in [\gamma^k : 0 \leq k < R]$}{
		\For{$l$ in stored\_logs}{
		    \If{GCD(k+l, R) = 1}{
			\KwNext $k$\;
		    }
		}
		\For{$1 \leq m < q-1$}{
		    \If{GCD(m, R) = 1}{
			$g \leftarrow \gamma^m; \, l \leftarrow \log_{\gamma}(g^2 + b g + c)$\;
			Store $l$ in stored\_logs\;
			\If{GCD(k+l, R) = 1}{
			    \KwNext $k$\;
			}
		    }
		}
		\If{$m = q-1$}{
		    FAIL\;
		}
	    }
	}
    }
}
\caption{Check whether $q$ has the quadratic primitive property\label{algo}}
\end{algorithm}

To maximise efficiency in Algorithm~\ref{algo} we store the logarithms we
have computed as well as the elements that have already been determined to
be primitive. In this way we can first check through our list of stored primitive
elements $a$ and only generate more primitive elements as needed.

We record these results
in Table \ref{timings}. A comparison with Table \ref{t:bounds} shows that we can eliminate 
those $q$ with $1\leq \omega(q-1)\leq 3$ and over 60\% of those $q$ with $\omega(q-1) = 4$. However, as can be seen from Table \ref{timings} this approach becomes infeasible to pursue for large values of $q$. Algorithm~\ref{algo} was, however, effective in treating the six remaining even values of~$q$. 



%
\begin{table}[h]
\begin{centering}
\begin{tabular}{|c|c|c|c|c|c|c|c|c|c|}
\hline
$\omega(q-1)$ & 1 & 2 & 3 & 4 & 5 \\
Number of $q$ checked & 7 & 19 & 82 & 215 (out of 351) & 5 (out of 475) \\
Time & 1s & 3.22s & 7.8 hrs & 150 days & 4.34 days \\
\hline
\end{tabular}\caption{Total timings for checking whether $q$ has the quadratic primitive property.}\label{timings}
\end{centering}
\end{table}

\subsection{A further algorithm for odd $q$}

Assume $q$ is odd. Fix a primitive root $g$ of $\F_q^\times$, and let
$P=\{g^n:n\in\Z,\,(n,q-1)=1\}\subseteq\F_q^\times$
be the set of all primitive roots.
For a prime $p\mid q-1$, let $r_p$ denote the composition
$$
\F_q^\times\xrightarrow{\log_g}\Z/(q-1)\Z
\xrightarrow{\text{\scriptsize mod $p$}}\Z/p\Z.
$$
For $d\mid q-1$, let $H_d=\langle g^d\rangle\le\F_q^\times$
denote the unique subgroup of index $d$.

Let $d$ be an odd unitary divisor of $q-1$ --- so that $(d,(q-1)/d)=1$ --- and set
$$
e=\prod_{p\mid\frac{q-1}d}p\quad\text{and}\quad
A_d=\bigl\{g^{nd}:0\le n<e\bigr\}.
$$
Then $A_d$ is a set of coset
representatives for $H_d/H_{de}$, and it follows that
any element of $\F_q^\times$ can be expressed uniquely in the form
$\alpha hh'$, where $\alpha\in A_d$, $h\in H_{de}$
and $h'\in H_{\frac{q-1}d}$.

Let $S_d$ denote the set of prime factors of $d$, and
for each $p\in S_d$ set
$$
R_p=\left\{n+p\Z:1\le n\le\frac{p-1}2\right\}\subset(\Z/p\Z)^\times.
$$
Note that $R_p$ is a set of coset representatives for
$(\Z/p\Z)^\times/\{\pm1\}$.
For $\varepsilon=(\varepsilon_p)_{p\in S_d}\in\{\pm1\}^{S_d}$, define
$$
X_d(\varepsilon)=\bigl\{x\in\F_q^\times:r_p(x)\in\varepsilon_pR_p\cup\{0\}
\text{ for all }p\in S_d\bigr\}.
$$
Then
\begin{equation}\label{eq:cover}
\F_q^\times=\bigcup_{\varepsilon\in\{\pm1\}^{S_d}}X_d(\varepsilon).
\end{equation}

\begin{lem}\label{lem:numericaltest}
Let $d$ be an odd unitary divisor of $q-1$, and define $A_d$, $S_d$ and
$X_d(\varepsilon)$ as above.
Suppose, for all $\alpha,\beta\in A_d$ and
$\varepsilon,\delta\in\{\pm1\}^{S_d}$, that
\begin{equation}\label{eq:hyp1}
\bigl\{(\alpha x+1)^2-\beta y:x\in P\cap X_d(\varepsilon),
y\in P\cap X_d(\delta)\bigr\}\supseteq\F_q^\times
\end{equation}
and
\begin{equation*}
\bigl\{x^2-\beta y:x\in P, y\in P\cap X_d(\delta)\bigr\}
\supseteq\F_q^\times.
\end{equation*}
Then the conclusion of Theorem~$\ref{plate}$ holds for $q$.
\end{lem}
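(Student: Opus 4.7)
The plan is to reduce the existence of a primitive pair $(g_m,g_n)$ with $g_n=Q(g_m)$ to the two covering hypotheses by a change of variables, splitting on whether $b=0$.

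If $b\neq 0$, complete the square to get $4a\,Q(g_m)=(2ag_m+b)^2-\Delta$ with $\Delta\neq 0$, and set $g_m=(b/(2a))\,\alpha x$; the relation then reads
\[
4ag_n \;=\; b^2\bigl[(\alpha x+1)^2-\Delta/b^2\bigr],
\]
so matching~\eqref{eq:hyp1} with target $\Delta/b^2\in\F_q^\times$ and $\beta y=(4a/b^2)g_n$ will do the job. Factor $b/(2a)=\mu_Bh_Bh_B'$ and $4a/b^2=\mu_Kh_Kh_K'$ uniquely in $A_d\cdot H_{de}\cdot H_{(q-1)/d}$, and take $\alpha,\beta\in A_d$ to be the representatives of the $H_{de}$-cosets of $\mu_B^{-1}$ and $\mu_K^{-1}$ respectively. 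With these choices, $(b/(2a))\alpha$ and $(4a/b^2)\beta$ both lie in $H_{de}\cdot H_{(q-1)/d}=H_e$, so $g_m$ and $g_n$ differ from primitive $x$ and $y$ by $H_e$-factors. The case $b=0$ is similar but uses the second hypothesis: the equation becomes $g_m^2-a^{-1}g_n=-a^{-1}c$, and taking $\beta=\mu_A$ (where $a^{-1}=\mu_Ah_Ah_A'$) with target $-a^{-1}c$ reduces it to that hypothesis; here $g_m=x$ directly, with no rescaling --- which is exactly why the second hypothesis imposes no $X_d(\varepsilon)$ constraint on $x$.

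The main obstacle is checking that primitivity of $x$ and $y$ survives multiplication by $H_e$. At any prime $p\mid (q-1)/d$, such a multiplication preserves the discrete log modulo $p$ (because $p\mid e$ divides the exponent), so primitivity at those primes is automatic. At each prime $p\in S_d$, the $H_{de}$-part still contributes $0\bmod p$ (since $p\mid d\mid de$), but the $H_{(q-1)/d}$-part shifts $r_p$ by some fixed residue $\lambda_p\in\Z/p\Z$; I must arrange $r_p(x)\neq -\lambda_p$, and similarly for $y$. Because $R_p$ and $-R_p$ partition $(\Z/p\Z)^\times$, choosing $\varepsilon_p$ (resp.\ $\delta_p$) equal to the sign of the coset \emph{not} containing $-\lambda_p$ accomplishes this whenever $\lambda_p\neq 0$, and either sign suffices when $\lambda_p=0$. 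Since the hypothesis is stipulated for every $(\varepsilon,\delta)$, it applies to the choice just described; invoking it produces primitive $x,y$ from which the required primitive pair $(g_m,g_n)$ is recovered by reversing the substitution.
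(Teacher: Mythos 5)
Your overall route is the same as the paper's: complete the square, decompose the relevant coefficients uniquely in $A_d\cdot H_{de}\cdot H_{(q-1)/d}$, invoke the hypothesis for the resulting $\alpha,\beta\in A_d$ and a suitably chosen $(\varepsilon,\delta)$, and recover $g_m,g_n$ from $x,y$ by $H_e$-multipliers. Your sign-avoidance argument at the primes $p\in S_d$ (choose $\varepsilon_p$ so that $\varepsilon_pR_p$ misses $-\lambda_p$) is exactly equivalent to the paper's choice "$\varepsilon_p$ = the half containing $\lambda_p$", since $-\lambda_p\notin\varepsilon_pR_p$ iff $\lambda_p\in\varepsilon_pR_p$; and the observation that $H_e$-multiplication leaves $r_p$ unchanged for $p\mid e$ is the other half of the paper's primitivity check. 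So the ideas are all present.

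However, there is a concrete error in the $b\ne0$ case: your choice of $\beta$ is inverted. You correctly note that what must be shown is that "$g_n$ differs from $y$ by an $H_e$-factor", i.e.\ that $g_n/y=(b^2/(4a))\beta\in H_e$; but what you actually verify is that $(4a/b^2)\beta\in H_e$, which is a different statement. With $\beta$ taken as the $A_d$-representative of $\mu_K^{-1}H_{de}$ one has $(b^2/(4a))\beta\equiv\mu_K^{-2}\pmod{H_e}$, and $\mu_K^2$ need not lie in $H_{de}$ (its discrete log is $2nd$ with $0\le n<e$, and $e\mid 2n$ generally fails). Consequently $r_p(g_n)=r_p(y)-2r_p(\mu_K)$ at primes $p\mid e$, which nothing in the hypothesis prevents from vanishing, so primitivity of $g_n$ is not secured. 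The correct choice is $\beta=\mu_K$ itself, the $A_d$-component of $4a/b^2$ — precisely analogous to your (correct) choice $\beta=\mu_A$ in the $b=0$ case, and this is what the paper does by writing $\beta=\beta_0h_2^{-1}h_2'^{-1}$ so that $\beta_0$ is the $A_d$-component of $\beta=4ab^{-2}$. With that one correction (and the corresponding $\lambda_p=r_p(h_K'^{-1})$ in the sign choice for $\delta$), your argument is complete and coincides with the paper's proof.
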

\begin{proof}
Let $a,b,c\in\F_q^\times$ with $a(b^2-4ac)\ne0$.
If $b\ne0$ then the relation $y=ax^2+bx+c$ is equivalent to
$$
\gamma=(\alpha x+1)^2-\beta y,
$$
where $\alpha=2ab^{-1}$, $\beta=4ab^{-2}$ and $\gamma=b^{-2}(b^2-4ac)$.
Similarly, for $b=0$, $y=ax^2+bx+c$ is equivalent to
$$
\gamma=x^2-\beta y,
$$
where $\beta=a^{-1}$ and $\gamma=-ca^{-1}$.
Hence, it suffices to show that
\begin{equation}\label{eq:reps1}
\bigl\{(\alpha x+1)^2-\beta y:x,y\in P\bigr\}
\supseteq\F_q^\times\quad\text{for all }\alpha,\beta\in\F_q^\times
\end{equation}
and
\begin{equation}\label{eq:reps2}
\bigl\{x^2-\beta y:x,y\in P\bigr\}\supseteq\F_q^\times
\quad\text{for all }\beta\in\F_q^\times.
\end{equation}

Consider fixed $\alpha,\beta,\gamma\in\F_q^\times$.
Let $\alpha_0,\beta_0\in A_d$,
$h_1,h_2\in H_{de}$ and $h_1',h_2'\in H_{\frac{q-1}d}$ be the elements
such that $\alpha=\alpha_0h_1^{-1}h_1'^{-1}$ and
$\beta=\beta_0h_2^{-1}h_2'^{-1}$.
By \eqref{eq:cover} we can
choose $\varepsilon,\delta\in\{\pm1\}^{S_d}$ such that
$h_1'\in X_d(\varepsilon)$ and $h_2'\in X_d(\delta)$.

Now, by hypothesis, there exist $x_0\in P\cap X_d(\varepsilon)$ and
$y_0\in P\cap X_d(\delta)$ such that
$$
(\alpha_0 x_0+1)^2-\beta_0y_0=\gamma.
$$
Hence, writing $x=h_1h_1'x_0$ and $y=h_2h_2'y_0$, we have
$\alpha x=\alpha_0x_0$ and $\beta y=\beta_0y_0$, so
that
$$
(\alpha x+1)^2-\beta y=\gamma.
$$
Further, for any $p\mid q-1$, we have $r_p(h_1)=0$, so
$$
r_p(x)=r_p(h_1')+r_p(x_0).
$$
If $p\notin S_d$ then $r_p(h_1')=0$ so that $r_p(x)=r_p(x_0)$, while
if $p\in S_d$ then
$$
r_p(x)\in(\varepsilon_pR_p\cup\{0\})+\varepsilon_pR_p
=(\Z/p\Z)^\times.
$$
In either case, we see that $r_p(x)\ne0$. Therefore $x\in P$,
and by a similar argument we find that $y\in P$.
Since $\alpha,\beta,\gamma$ were
arbitrary, we conclude that \eqref{eq:reps1} holds.

Similarly, in the $b=0$ case we choose $x\in P$ and
$y_0\in P\cap X_d(\delta)$ such that
$x^2-\beta_0 y_0=\gamma$,
and we set $y=h_2h_2'y_0$. As above we see
that $y\in P$ and $x^2-\beta y=\gamma$.
Thus \eqref{eq:reps2} holds.
\end{proof}


For given $\alpha$, $\beta$, $\varepsilon$ and $\delta$, using a fast
convolution algorithm we can compute the set \eqref{eq:hyp1} using
$O(q\log{q})$ arithmetic operations on numbers with
$O(\log{q})$ bits. Thus, the total time to check the criterion given in
Lemma~\ref{lem:numericaltest} is
$$
\ll(e2^{\omega(d)})^2q(\log{q})^{O(1)}
\ll_\varepsilon e^2q^{1+\varepsilon}.
$$
For very large $q$ we expect the criterion to be satisfied even with
$e=2$, so that Theorem~\ref{plate} can be verified in quasi-linear time
$O(q^{1+\varepsilon})$. However, that is slightly misleading, since
we will only apply the algorithm to those $q$ satisfying
$q\le4W(q-1)^4$, in which case the running time is
$$
\gg W(q-1)^2q\log{q}\gg q^{3/2}\log{q}.
$$
There is also significant overhead in the convolution algorithm, to
the point that we found the naive method of enumerating all values of
$(\alpha x+1)^2-\beta{y}$ to be faster in practice. That method requires
approximately $(e\phi(q-1))^2$ arithmetic operations in $\F_q$,
which is still reasonable for $q\le18,888,871$ on modern computers,
provided that $e$ is not too large.

Write the prime factorisation of $q-1$ as $\prod_{i=1}^sp_i^{e_i}$,
where $2=p_1<\cdots<p_s$. We coded the criterion of
Lemma~\ref{lem:numericaltest} with $d=\prod_{n<i\le s}p_i^{e_i}$ for some
$n\in\{1,\ldots,s\}$. We first try $n=1$ (corresponding to $e=2$), then
$n=2$, and so on, until either the criterion is satisfied or we reach
$n=s$ without success. Note that when $n=s$ (corresponding to $d=1$),
our algorithm becomes an exhaustive search, so it must eventually succeed
whenever the conclusion of Theorem~\ref{plate} holds for $q$.

Applying this procedure (see \cite{code}) to the 1447 odd values of $q$ that were not
covered by Theorems~\ref{t:Cohen} and \ref{mps}, we found that most
succeeded with $n=1$ or $2$. In particular, for all $q>150,151$ the
algorithm succeeded for some choice of $e\le6$.


\section*{Acknowledgements}
The second and fourth authors would like to thank Tom\'{a}s Oliveira e Silva with whom we had several discussions on preliminary versions of this paper.

\end{document}